\newtheorem{proposition}{Proposition}
\newtheorem*{theorem*}{Theorem}
\newtheorem*{lemma*}{Lemma}
\theoremstyle{definition}
\newtheorem{definition}{Definition}
\newtheorem*{question}{Question}
\theoremstyle{remark}
\begin{document}

\begin{frontmatter}



\title{Winding Number Criterion for the Origin to Belong to the Numerical Range of a Matrix on a Loop of Matrices}


\author[inst1]{Cheng Guo}
\ead{guocheng@stanford.edu}
\affiliation[inst1]{organization={Ginzton Laboratory and Department of Electrical Engineering},
        addressline={Stanford University}, 
            city={Stanford},
            postcode={94305}, 
            state={California},
            country={USA}}

\author[inst1]{Shanhui Fan}
\ead{shanhui@stanford.edu}

\begin{abstract}
Let $A:[0,1]\to GL(n,\mathbb{C})$ be continuous with $A(0)=A(1)$, thus the winding number of $\det A$ is well-defined. If the winding number is not divisible by $n$, then the origin belongs to the numerical range of $A(\phi)$ for some $\phi \in [0,1]$. 
\end{abstract}



\begin{keyword}
Numerical range \sep Winding number \sep Sectorial matrix
\MSC[2020] 15A60 \sep 55M25
\end{keyword}

\end{frontmatter}


\section{Introduction}
\label{sec:introduction}

Let $M\in M_{n}$ be an $n \times n$ complex matrix. The numerical range of $M$ is the subset of $\mathbb{C}$ defined as 
\begin{equation}
W(M) \coloneqq \{x^{*} M x: x \in \mathbb{C}^{n}, x^{*}x = 1\}.    
\end{equation}
One important question is whether the numerical range contains the origin. This question arises naturally in many pure and applied 
mathematics problems. For example, consider the generalized eigenvalue problem 
\begin{equation}
Bx = \lambda Cx,   
\end{equation}
where $B$ and $C$ are $n\times n$ Hermitian matrices. If the pair of Hermitian matrices $(B,C)$ is definite~\cite{crawford1976,stewart1979}, i.e.,  
\begin{equation}
0 \notin W(B+iC),
\end{equation}
then there exists an invertible matrix $X$ such that $X^{*}BX$ and $X^*CX$ are both diagonal~\cite{stewart1979},  which significantly simplifies the study of the generalized eigenvalue problem. 

The set of matrices 
\begin{equation}
    \mathbb{W}_n \coloneqq \{M\in M_n: 0 \notin W(M) \}
\end{equation} 
are called (rotational) sectorial matrices~\cite{ballantine1975,mathias1992,arlinskii2003,furtado2006,drury2014b,zhang2015g,drury2015,raissouli2017,alakhrass2020a,sano2020,tan2020a,alakhrass2021,mao2021a,chen2022a,ringh2022,zhao2022,wang2023}. These matrices exhibit many nice properties. For example, the geometric mean between positive definite matrices can be generalized to sectorial matrices~\cite{drury2014b}. Moreover, sectorial matrices have been used to define the phases of a matrix~\cite{horn1959,wang2020au,zhao2022,wang2023}, which can be used to angularly bound the eigenvalues by majorization-type inequalities~\cite{wang2020au}. It is therefore important to find criteria that detect whether $M \in \mathbb{W}_n$. This problem has been discussed in many previous works~\cite{johnson1990,bourdon2002,psarrakos2002,
knockaert2006,gau2008,diogo2015,wu2019h}. However, there still lacks a simple analytical criterion.

In this paper, we focus on a closely related question for a loop of matrices:
\begin{question}
Let $A:[0,1] \to M_{n}$ be a continuous function with $A(0)=A(1)$, find a simple criterion to detect whether there exists $\phi \in [0,1]$ such that 
\begin{equation}
0 \in W(A(\phi)).  
\end{equation}    
\end{question} 
We are drawn to this question because many problems involve parameterized matrices. It is useful to detect whether the parameterized matrix always belongs to $\mathbb{W}_n$ as the parameter varies~\cite{furtado2004,furtado2006,psarrakos2000}. For example, consider the parameterized generalized eigenvalue problem 
\begin{equation}
B(p)x = \lambda C(p)x,    
\end{equation}
where $B$ and $C$ are $n\times n$ Hermitian matrices that depend continuously on some parameters $p$. It is important to know whether $(B(p),C(p))$ is definite along a path $\gamma:[0,1]\mapsto p=\gamma(t)$ in the parameter space. This problem reduces to the question above when $\gamma$ is a loop. 

This paper aims to provide a simple sufficient criterion for this question. Our criterion only needs the dimensionality and determinant of $A(t)$:

We calculate $\det A(t)$ and check whether there exists $\phi \in [0,1]$ such that
\begin{equation}
\det A(\phi) = 0.    
\end{equation}
If true, then $0 \in W(A(\phi))$. If false, then $\det A(t), t\in[0,1]$ maps to a closed path in $\mathbb{C}\backslash\{0\}$. We calculate the winding number of $\det A$ around the origin:
\begin{equation}
\operatorname{wn}(A) \coloneqq \operatorname{wn}(\det A) \in \mathbb{Z}.    
\end{equation}
We check whether $n$ divides $\operatorname{wn}(A)$. We claim that if
\begin{equation} \label{eq:criterion_to_prove}
n \nmid \operatorname{wn}(A),    
\end{equation}
then there must exist $\phi \in[0,1]$ such that $0 \in W(A(\phi))$. 

The criterion (\ref{eq:criterion_to_prove}) is our main result. It is sufficient but not necessary: If 
\begin{equation}
n \mid \operatorname{wn}(A),     
\end{equation}
there may or may not exist $\phi \in[0,1]$ such that $0 \in W(A(\phi))$. For example, consider 
\begin{equation}
A_{k}(t) = e^{i 2\pi kt} A_{0}, \quad \tilde{A}_{k}(t) = e^{i 2\pi kt} \tilde{A}_{0}, \quad k\in\mathbb{Z}, \quad  t\in[0,1].    
\end{equation}
where $A_{0}$ and $\tilde{A}_{0}$ are invertible $n\times n$ matrices with
\begin{equation}
0 \in W(A_{0}), \quad 0 \notin W(\tilde{A}_{0}).
\end{equation}
One can show that 
\begin{equation}
\operatorname{wn}(A_{k}) = \operatorname{wn}(\tilde{A}_{k}) = nk.    
\end{equation}
Hence 
\begin{equation}
n \mid  \operatorname{wn}(A_{k}), \quad n \mid  \operatorname{wn}(\tilde{A}_{k}).  
\end{equation}
Nonetheless,
\begin{equation}
0 \in W(A_{k}(t)), \quad 0 \notin W(\tilde{A}_{k}(t)).    
\end{equation}

The rest of this paper provides detailed proof of our criterion~(\ref{eq:criterion_to_prove}), which is summarized as the main theorem in Sec.~\ref{sec:main_theorem}.

\section{Background}\label{sec:background}

We summarize the necessary background that will be useful in our proof. 

First, we review some properties of the numerical range~\cite{horn2008,gustafson1997,wu2021a}. 
\begin{proposition}[Toeplitz-Hausdorff~\cite{toeplitz1918,hausdorff1919}]
Let $M\in M_{n}$. Then $W(M)$ is a compact convex subset of $\mathbb{C}$ that contains all the eigenvalues of $M$. Thus,
\begin{equation}\label{eq:conv_subset_W}
\operatorname{conv} \bm{\lambda}(M) \subseteq W(M),  
\end{equation}
where $\operatorname{conv} \bm{\lambda}(M)$ denotes the convex hull of all the eigenvalues of $M$. 
\end{proposition}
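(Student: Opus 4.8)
The plan is to prove the three assertions in turn — compactness of $W(M)$, containment of the eigenvalues, and convexity — and then deduce the displayed inclusion from the last two. The first two are quick. For compactness, $W(M)$ is the image of the unit sphere $S=\{x\in\mathbb{C}^n:x^*x=1\}$ under the map $x\mapsto x^*Mx$; this map is a polynomial in the real and imaginary parts of the coordinates of $x$, hence continuous, and $S$ is closed and bounded in $\mathbb{C}^n\cong\mathbb{R}^{2n}$, hence compact, so $W(M)$ is compact. For the eigenvalues, if $Mx=\lambda x$ with $x^*x=1$ then $x^*Mx=\lambda x^*x=\lambda$, so $\bm{\lambda}(M)\subseteq W(M)$.

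The substantive part is convexity, and I would prove it by showing that the segment joining any two points $z_1,z_2\in W(M)$ lies in $W(M)$. If $z_1=z_2$ this is trivial, so assume $z_1\neq z_2$. Since $x^*(\alpha M+\beta I)x=\alpha(x^*Mx)+\beta$ we have $W(\alpha M+\beta I)=\alpha W(M)+\beta$, and taking $\alpha=1/(z_2-z_1)$, $\beta=-z_1/(z_2-z_1)$ reduces the claim to: if $0,1\in W(M)$ then $[0,1]\subseteq W(M)$. Choose unit vectors $x_1,x_2$ with $x_1^*Mx_1=0$ and $x_2^*Mx_2=1$; note $x_1,x_2$ are linearly independent, for otherwise $x_2=cx_1$ with $|c|=1$ and then $z_1=z_2$. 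For $t\in[0,1]$ and a phase $e^{i\theta}$ to be chosen, put $y_t=\sqrt{1-t}\,x_1+\sqrt{t}\,e^{i\theta}x_2$; a direct expansion gives
\[
y_t^*My_t = t + \sqrt{t(1-t)}\bigl(e^{i\theta}x_1^*Mx_2 + e^{-i\theta}x_2^*Mx_1\bigr),\qquad \|y_t\|^2 = 1 + 2\sqrt{t(1-t)}\,\operatorname{Re}\bigl(e^{i\theta}x_1^*x_2\bigr).
\]

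The key point is that $\theta$ can be chosen so that the bracketed cross term is real: writing $w:=x_1^*Mx_2-\overline{x_2^*Mx_1}$, one checks that this is equivalent to $e^{2i\theta}w=\bar w$, which is always solvable (any $\theta$ if $w=0$, and $e^{2i\theta}=\bar w/w$ otherwise, since $|\bar w/w|=1$). With such $\theta$, both $y_t^*My_t$ and $\|y_t\|^2$ are real, and $\|y_t\|^2\ge 1-2\sqrt{t(1-t)}\,|x_1^*x_2|\ge 1-|x_1^*x_2|>0$ because $|x_1^*x_2|<1$ by Cauchy–Schwarz (strict, as $x_1,x_2$ are not parallel). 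Hence $f(t):=y_t^*My_t/\|y_t\|^2=\bigl(y_t/\|y_t\|\bigr)^*M\bigl(y_t/\|y_t\|\bigr)\in W(M)$ for every $t\in[0,1]$, the function $f$ is continuous with $f(0)=0$ and $f(1)=1$, and by the intermediate value theorem its image contains $[0,1]$; thus $[0,1]\subseteq W(M)$. Finally, since $W(M)$ is convex and contains $\bm{\lambda}(M)$, it contains $\operatorname{conv}\bm{\lambda}(M)$. The only real work is the convexity argument, and within it the one step that needs care is the choice of the phase $\theta$ making the cross term real (together with the verification that $\|y_t\|\neq 0$); everything else is a routine computation and an appeal to the intermediate value theorem.
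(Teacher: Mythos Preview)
Your proof is correct and is essentially the classical argument for the Toeplitz--Hausdorff theorem. However, the paper does not prove this proposition at all: it is stated in the Background section as a known result, with citations to Toeplitz (1918) and Hausdorff (1919), and is simply quoted for later use. So there is nothing to compare against --- you have supplied a complete proof where the paper merely cites one.
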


Second, we review the functional continuity of matrix eigenvalues~\cite{li2019a}.
\begin{proposition}
[Kato~\cite{kato1995}]
Let $A: [0,1] \to M_{n}$ be a continuous function. Then there exist $n$ continuous functions $\lambda_{1}(t), \dots, \lambda_{n}(t)$ from $[0,1]$ to $\mathbb{C}$ that parameterize the $n$ eigenvalues (counted with algebraic multiplicities) of $A(t)$.
\end{proposition}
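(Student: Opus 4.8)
The plan is to prove the contrapositive: assume $0 \notin W(A(\phi))$ for every $\phi \in [0,1]$, and show that $n \mid \operatorname{wn}(A)$. Under this assumption every $A(\phi)$ is invertible (since $0 \in W(M)$ whenever $M$ is singular, as $0$ is then an eigenvalue and eigenvalues lie in $W(M)$ by Toeplitz--Hausdorff), so $\det A(\phi)$ never vanishes and $\operatorname{wn}(A)$ is well-defined. The key geometric fact is that $0 \notin W(M)$ means $W(M)$, being compact and convex, lies strictly inside an open half-plane through the origin; equivalently there is a direction $e^{i\theta}$ with $\operatorname{Re}(e^{-i\theta} z) > 0$ for all $z \in W(M)$. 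Since the eigenvalues of $M$ lie in $W(M)$, all $n$ eigenvalues of $M$ lie in that open half-plane.

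The main step is to track the argument of $\det A(\phi)$ through the eigenvalues. By Kato's proposition, choose continuous functions $\lambda_1(t),\dots,\lambda_n(t)$ parameterizing the eigenvalues of $A(t)$; since each $A(t)$ is invertible, each $\lambda_j(t)$ is a continuous nowhere-zero function $[0,1]\to\mathbb{C}\setminus\{0\}$. Then $\det A(t) = \prod_{j=1}^n \lambda_j(t)$, and because the argument is additive along continuous lifts,
\begin{equation}
\operatorname{wn}(\det A) = \sum_{j=1}^n \operatorname{wn}(\lambda_j),
\end{equation}
where $\operatorname{wn}(\lambda_j)$ is the winding number of the closed-or-not path $\lambda_j$; note that individual $\lambda_j$ need \emph{not} be loops (eigenvalues may be permuted as $t$ goes $0\to1$), so $\operatorname{wn}(\lambda_j)$ is really $\tfrac{1}{2\pi}$ times the total change in a continuous argument of $\lambda_j$, a real number in general. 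The point is that the \emph{sum} is the integer $\operatorname{wn}(A)$.

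Now I exploit the half-plane condition to control each $\operatorname{wn}(\lambda_j)$ up to the permutation bookkeeping. Here is the cleanest route: for each fixed $\phi$, all eigenvalues of $A(\phi)$ lie in an open half-plane $H_\phi$ whose bounding line passes through $0$; hence $\operatorname{conv}\bm\lambda(A(\phi))$ is a compact convex set inside $\mathbb{C}\setminus\{0\}$ of angular diameter $< \pi$, so a continuous argument function on it is well-defined up to an additive constant. Define $g(\phi)$ to be the "mean argument" of the eigenvalues, $g(\phi) \coloneqq \tfrac{1}{n}\arg\big(\prod_j \lambda_j(\phi)\big)$ suitably lifted; the subtlety is making this a genuine continuous real-valued function of $\phi$. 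I would do this by covering $[0,1]$ by finitely many intervals on each of which a common half-plane works, patching continuous-argument choices, and observing that on each eigenvalue the argument changes by less than $\pi$ over such a subinterval. Running this around the full loop $\phi:0\to1$: since $A(0)=A(1)$ the \emph{multiset} of eigenvalues returns to itself, so the mean argument $g$ returns to its start, i.e. $g(1)-g(0)=0$ — but tracking the continuous lift of $\arg\prod_j\lambda_j(\phi) = \arg\det A(\phi)$, whose total change is $2\pi\operatorname{wn}(A)$, we get $2\pi\operatorname{wn}(A) = n\big(g(1)-g(0)\big) + 2\pi n m$ for some integer $m$ coming from the fact that $g$ is only defined modulo $2\pi$ (equivalently, $\arg\det$ is $n$ times a mean-argument that is itself defined mod $2\pi$). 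Hence $\operatorname{wn}(A) = nm$, i.e. $n \mid \operatorname{wn}(A)$, which is the contrapositive of the claim.

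\textbf{The main obstacle.}
The hard part is the middle paragraph: making rigorous the idea that "$\arg\det A(\phi)$ is $n$ times a continuous mean-argument that is well-defined modulo $2\pi$." The eigenvalues can collide and be permuted, so one cannot globally separate the branches $\lambda_j$; the sectorial condition saves the day only locally (on subintervals sharing a half-plane). I expect the cleanest implementation is: (i) use compactness of $[0,1]$ and continuity of $\phi\mapsto W(A(\phi))$ (which follows from continuity of $A$) to get a finite partition $0=t_0<t_1<\cdots<t_N=1$ and unit vectors $u_k$ with $\operatorname{Re}(\bar u_k z)>0$ for all $z\in W(A(\phi))$, $\phi\in[t_{k-1},t_k]$; (ii) on each such subinterval, $e^{-i\arg u_k}\lambda_j(\phi)$ has positive real part, so $\arg(e^{-i\arg u_k}\lambda_j(\phi))\in(-\pi/2,\pi/2)$ is canonically defined and continuous, and summing over $j$ gives a continuous determination of $\arg\det A(\phi) - n\arg u_k$ on $[t_{k-1},t_k]$ taking values in $(-n\pi/2, n\pi/2)$; (iii) glue these across the $t_k$, each gluing adjusting by an integer multiple of $2\pi n$ (because the two local determinations of $\arg\det$ agree mod $2\pi$ and both are "$n\cdot(\text{something})$"); (iv) conclude the total monodromy $2\pi\operatorname{wn}(A)$ of $\arg\det A$ around the loop is a multiple of $2\pi n$. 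Step (iii) is where one must be careful that the ambiguity is genuinely a multiple of $2\pi n$ rather than merely $2\pi$ — this comes precisely from the structure "$\arg\det = \sum_j \arg\lambda_j$" with each $\arg\lambda_j$ pinned into an interval of length $\pi$, so that changing the ambient half-plane $u_k\to u_{k+1}$ shifts \emph{every} summand by the same near-multiple of $2\pi$, producing a shift of $n$ times that in the sum.
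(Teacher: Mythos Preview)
Your proposal does not prove the stated proposition. The proposition in question is Kato's result on continuous selection of eigenvalues: given a continuous $A:[0,1]\to M_n$, there exist continuous $\lambda_1(t),\dots,\lambda_n(t)$ tracking the spectrum. The paper does not prove this either; it is quoted as background and cited to \cite{kato1995}. What you have written is instead an argument for the paper's \emph{main theorem} (the winding-number criterion), and you explicitly invoke Kato's proposition as an ingredient (``By Kato's proposition, choose continuous functions $\lambda_1(t),\dots,\lambda_n(t)$\dots''). So as a proof of the assigned statement, the proposal is circular: it assumes precisely what is to be shown and then proves something else.

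If your target was actually the main theorem, then your contrapositive half-plane route is a legitimate alternative to the paper's proof. The paper argues directly: it splits into two cases according to whether the permutation $\alpha(A)$ sending $\lambda_j(1)$ to $\lambda_{\alpha_j}(0)$ is trivial, and in the nontrivial case constructs an auxiliary loop $A'$ of winding number zero (via Schur triangulation and explicit interpolation) to reduce to the trivial case. Your approach bypasses that case analysis entirely: on each subinterval of a finite cover all eigenvalues sit in a common open half-plane, so each has a canonical argument in a window of length $\pi$; at a junction $t_k$ the two canonical arguments of any fixed eigenvalue differ by the \emph{same} integer multiple of $2\pi$ (the nearest integer to $(\arg u_k-\arg u_{k+1})/2\pi$, well-defined since the two half-planes overlap), forcing the jump in $\sum_j\arg\lambda_j$ to be a multiple of $2\pi n$. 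This is correct and somewhat slicker than the paper's construction, and since it uses only the eigenvalue locations it yields the stronger conclusion $0\in\operatorname{conv}\bm{\lambda}(A(\phi))$ just as the paper does. But none of this constitutes a proof of Kato's proposition.
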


Third, we review the concept of the winding number~\cite{hatcher2002,roe2015}. 

\begin{definition}
Let $\gamma: [0,1] \to \mathbb{C}\backslash \{0\}$ be a continuous path with $\gamma(0)=\gamma(1)$. The winding number of $\gamma$ around the origin is 
\begin{equation}
\operatorname{wn}(\gamma) \coloneqq s(1) - s(0) \in \mathbb{Z},    
\end{equation}
where $(\rho,s)$ is the path written in polar coordinates, i.e.,~the lifted path through the covering map
\begin{equation}
    p: \mathbb{R}^+ \times \mathbb{R} \to \mathbb{C}\backslash \{0\}: (\rho_0, s_0) \mapsto \rho_0 e^{i2\pi s_0}.
\end{equation}
\end{definition}

If $\gamma$ is piecewise differentiable, $\operatorname{wn}(\gamma)$ can be calculated via integration:
\begin{equation}
    \operatorname{wn}(\gamma) = \frac{1}{2\pi i}\int_{0}^{1} \frac{1}{\gamma(t)} \frac{\mathrm{d}\gamma(t)}{\mathrm{d}t}  \mathrm{d}t.
\end{equation}

\begin{definition}
Let $A:[0,1]\to GL(n,\mathbb{C})$ be continuous with $A(0)=A(1)$. The winding number of $A$ is defined as the winding number of $\det A$:
\begin{equation}
\operatorname{wn}(A) \coloneqq \operatorname{wn}(\det A) \in \mathbb{Z}. 
\end{equation}
\end{definition}

If $A$ is piecewise differentiable, $\operatorname{wn}(A)$ can be calculated via integration:
\begin{equation}
    \operatorname{wn}(A) = \frac{1}{2\pi i}\int_{0}^{1} \frac{1}{\det A(t)} \frac{\mathrm{d}\det A(t)}{\mathrm{d} t} \mathrm{d}t.
\end{equation}
Or equivalently, using Jacobi's formula~\cite{magnus2019},  
\begin{equation}
\operatorname{wn}(A) = \frac{1}{2\pi i}\int_{0}^{1} \operatorname{Tr}\left[ A^{-1}(t) \frac{\mathrm{d}A(t)}{\mathrm{d} t}\right] \mathrm{d}t.
\end{equation}
Finally, $\operatorname{wn}(A)$ has a simple topological interpretation: It labels the first homotopy class of $A$ in $GL(n,\mathbb{C})$ since~\cite{hall2015}
\begin{equation}
    \pi_1 \left[GL(n,\mathbb{C})\right] \cong \mathbb{Z}. 
\end{equation}

\section{Main Theorem}\label{sec:main_theorem}

Now we state and prove our main theorem:

\begin{theorem*}\label{theorem:COS}
Let $A:[0,1]\to GL(n,\mathbb{C})$ be continuous with $A(0)=A(1)$. If 
\begin{equation}
n \nmid \operatorname{wn}(A),
\end{equation}
then there exists $\phi \in [0,1]$ such that
\begin{equation}\label{eq:theorem:conclusion}
0 \in \operatorname{conv}\bm{\lambda}(A(\phi)) \subseteq W(A(\phi)).    
\end{equation}
\end{theorem*}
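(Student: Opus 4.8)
The plan is to prove the contrapositive form: assuming that $0 \notin \operatorname{conv}\bm{\lambda}(A(\phi))$ for \emph{every} $\phi \in [0,1]$, I will deduce $n \mid \operatorname{wn}(A)$; the inclusion $\operatorname{conv}\bm{\lambda}(A(\phi)) \subseteq W(A(\phi))$ asserted in the conclusion is free from the Toeplitz--Hausdorff proposition. First, Kato's proposition supplies continuous eigenvalue curves $\lambda_1,\dots,\lambda_n\colon[0,1]\to\mathbb{C}$; since $A(t)\in GL(n,\mathbb{C})$ none of them vanishes, and since $A(0)=A(1)$ the multiset $\{\lambda_1(1),\dots,\lambda_n(1)\}$ equals $\{\lambda_1(0),\dots,\lambda_n(0)\}$, so we may fix a permutation $\sigma\in S_n$ with $\lambda_j(1)=\lambda_{\sigma(j)}(0)$ for all $j$.

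The geometric heart of the argument is to select a separating direction that varies continuously with $t$. For each $t$ let $p(t)$ be the point of the compact convex set $\operatorname{conv}\bm{\lambda}(A(t))$ nearest the origin. The standing assumption forces $p(t)\neq 0$; the map $t\mapsto\operatorname{conv}\bm{\lambda}(A(t))$ is continuous in the Hausdorff metric (the $\lambda_j$ being continuous), and the nearest-point projection of a fixed point depends continuously on the convex body, so $p\colon[0,1]\to\mathbb{C}\setminus\{0\}$ is continuous; moreover $p(1)=p(0)$ because $A(1)=A(0)$, so $p$ is a loop and $w:=\operatorname{wn}(p)$ is defined. The supporting-hyperplane inequality at the nearest point gives $\operatorname{Re}\bigl(\overline{p(t)}\,z\bigr)\ge|p(t)|^2>0$ for all $z\in\operatorname{conv}\bm{\lambda}(A(t))$; taking $z=\lambda_j(t)$ shows each ratio $\lambda_j(t)/p(t)$ lies in the open right half-plane, hence $\beta_j(t):=\operatorname{Arg}\bigl(\lambda_j(t)/p(t)\bigr)$ is continuous in $t$ (the principal argument being continuous off the negative real axis).

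Next I track arguments. Writing $p(t)=|p(t)|e^{i\alpha(t)}$ for a continuous lift $\alpha$ (so $\alpha(1)-\alpha(0)=2\pi w$), the sum $\alpha(t)+\beta_j(t)$ is a continuous argument of $\lambda_j(t)$, so the net increment of the argument of $\lambda_j$ over $[0,1]$ is $2\pi w+\beta_j(1)-\beta_j(0)$. Because $\det A=\prod_j\lambda_j$, the argument of $\det A$ increments by the sum over $j$, whence
\begin{equation}
2\pi\,\operatorname{wn}(A)\;=\;\sum_{j=1}^{n}\bigl(2\pi w+\beta_j(1)-\beta_j(0)\bigr)\;=\;2\pi n w+\sum_{j=1}^{n}\bigl(\beta_j(1)-\beta_j(0)\bigr).
\end{equation}
The permutation then annihilates the remaining sum: from $p(1)=p(0)$ and $\lambda_j(1)=\lambda_{\sigma(j)}(0)$ we get the equality of complex numbers $\lambda_j(1)/p(1)=\lambda_{\sigma(j)}(0)/p(0)$, hence $\beta_j(1)=\beta_{\sigma(j)}(0)$ for each $j$; reindexing the sum by the bijection $\sigma$ gives $\sum_j\beta_j(1)=\sum_j\beta_j(0)$, so the sum telescopes to $0$ and $\operatorname{wn}(A)=n w$, contradicting $n\nmid\operatorname{wn}(A)$.

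The step I expect to require the most care is the very first geometric assertion — that the separating direction $p(t)$ can be chosen continuously in $t$ and never degenerates — because the loop $p$, its winding number $w$, the continuity of the $\beta_j$, and the argument-increment computation all rest on it; once $p$ is in hand the rest is routine manipulation of continuous branches of the argument together with the cycle structure of $\sigma$.
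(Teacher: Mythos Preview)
Your proof is correct and takes a genuinely different route from the paper's. The paper splits into two cases according to whether the permutation $\sigma$ is the identity: when $\sigma=\operatorname{id}$ each eigenvalue curve is itself a loop, and the paper finds two with distinct winding numbers and applies the intermediate value theorem to their argument difference; when $\sigma\neq\operatorname{id}$ the paper constructs, via Schur triangulation and a five-piece piecewise path, an auxiliary loop $A'$ of winding number zero whose eigenvalue curves undo the permutation, concatenates $A$ with $A'$, and thereby reduces to Case I. This construction occupies the bulk of the proof.

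Your argument avoids the case split entirely. By taking the nearest-point projection $p(t)$ of the origin onto $\operatorname{conv}\bm{\lambda}(A(t))$ you obtain a \emph{periodic} reference direction relative to which every eigenvalue has principal argument in $(-\pi/2,\pi/2)$; the periodicity of $p$ together with the permutation identity $\beta_j(1)=\beta_{\sigma(j)}(0)$ makes the boundary terms cancel in one line, regardless of $\sigma$. What your approach buys is brevity and uniformity; what the paper's approach buys is that it stays closer to first principles, never invoking the (true but slightly less elementary) fact that the metric projection onto a compact convex set depends continuously on the set in the Hausdorff metric. Your identification of that step as the one requiring the most care is apt: it is the only place where a reader might want a reference or the short compactness-plus-uniqueness argument spelled out.
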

\begin{proof}
From Eq.~(\ref{eq:conv_subset_W}), we obtain
\begin{equation}
\operatorname{conv}\bm{\lambda}(A(t)) \subseteq W(A(t)), \quad \forall t \in [0,1].  
\end{equation}
It suffices to prove the existence of $\phi \in [0,1]$ such that 
\begin{equation}\label{eq:relation_to_proof}
0 \in \operatorname{conv}\bm{\lambda}(A(\phi)).   \end{equation}
We denote the $n$ eigenvalues of $A(t)$ as 
\begin{equation}
\bm{\lambda}(A(t)) =  \left[\lambda_{1}(t), \dots, \lambda_{n}(t)\right]^T.  
\end{equation}
According to Kato's theorem, we can choose $\lambda_{1}(t), \dots, \lambda_{n}(t)$ to be continuous functions of $t$ on $[0,1]$. Since $A(t) \in GL(n,\mathbb{C})$, 
\begin{equation}
\lambda_{j}(t) \neq 0, \qquad j=1,\dots,n.    
\end{equation} 
Since $A(0) = A(1)$, 
\begin{equation}
\lambda_{j}(1) = \lambda_{\alpha_{j}}(0),  
\end{equation}
where 
\begin{equation}
\alpha (A) = \begin{pmatrix}
1 & 2 &\dots & n \\
\alpha_{1} & \alpha_{2} &\dots &\alpha_{n}
\end{pmatrix} \in S_n  
\end{equation}
is a permutation. We will prove (\ref{eq:relation_to_proof}) first in the special case when 
\begin{equation}
\alpha(A) = \operatorname{id} \coloneqq \begin{pmatrix}
1 & 2 &\dots & n \\
1 & 2 &\dots & n
\end{pmatrix},    
\end{equation}
then in the other cases when $\alpha(A) \neq \operatorname{id}$. 

\begin{figure}[htbp]
    \centering
    \includegraphics[width=0.4\textwidth]{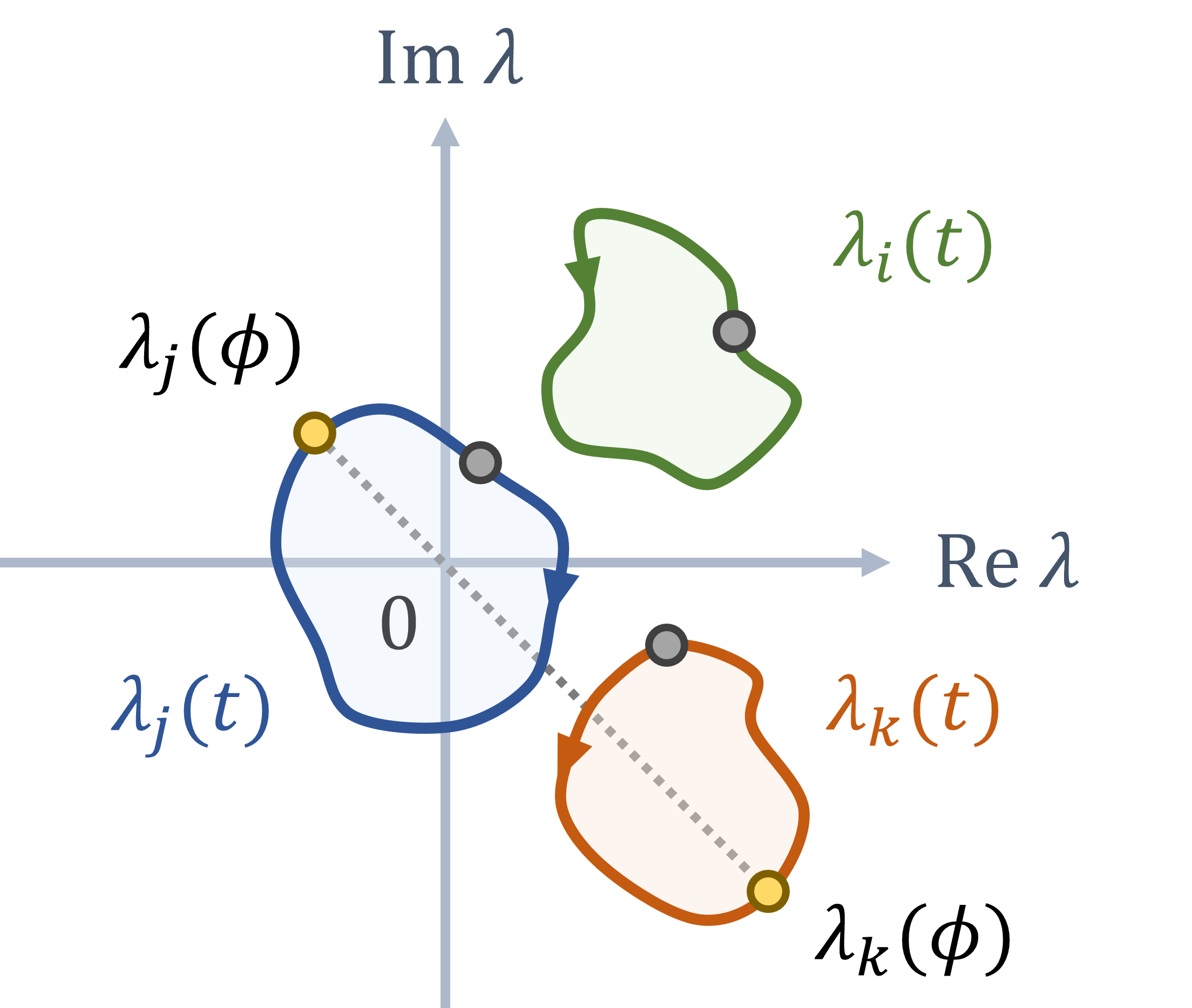}
    \caption{Scheme for the $\alpha(A) = \operatorname{id}$ case.}
    \label{fig:alpha_id_case}
\end{figure}
\textbf{Case I.} $\alpha (A)= \operatorname{id}$. (See Fig.~\ref{fig:alpha_id_case} for a scheme.) Then each $\lambda_j: [0,1] \to \mathbb{C}\backslash \{0\}$ traces out a closed path, and thus has a well-defined winding number:
\begin{equation}\label{eq:def_wind_lambda_j}
\operatorname{wn}(\lambda_{j}) \in \mathbb{Z}, \qquad j=1,\dots,n.    
\end{equation}
The winding number of a pointwise product of loops is the sum of winding numbers of each loop~\cite{roe2015}:  
\begin{equation}
\operatorname{wn}(A) = \operatorname{wn}(\det A) = \operatorname{wn}\left(\prod_{i=1}^{n} \lambda_{j}\right) = \sum_{j=1}^{n} \operatorname{wn}(\lambda_{j}).    
\end{equation}
Since
\begin{equation}\label{eq:condition_not_divisible}
n \nmid \operatorname{wn}(A),   
\end{equation}
there exist $1\leq j\neq k \leq n$ such that
\begin{equation}
\operatorname{wn}(\lambda_{j}) \neq \operatorname{wn}(\lambda_{k}).    
\end{equation}
Otherwise, we would have 
\begin{equation}
n|\operatorname{wn}(A) = n \operatorname{wn}(\lambda_{1}),   
\end{equation}
which contradicts (\ref{eq:condition_not_divisible}). Now consider the line segment $\overline{\lambda_{j}(t)\lambda_{k}(t)}$. We claim that there exists $\phi\in[0,1]$ such that 
\begin{equation}\label{eq:0_line_segment}
0 \in \overline{\lambda_{j}(\phi)\lambda_{k}(\phi)} \subseteq \operatorname{conv}\bm{\lambda}(A(\phi)),  
\end{equation} 
which would complete the proof of  (\ref{eq:relation_to_proof}) for the $\alpha(A) = \operatorname{id}$ case.  

Now we prove (\ref{eq:0_line_segment}). Since $\lambda_{j}(t) \neq 0$ and $\lambda_{k}(t) \neq 0$, it suffices to show the existence of $\phi \in [0,1]$ such that 
\begin{equation}\label{eq:arg_to_prove}
\arg\left[\lambda_{j}(\phi)\right] - \arg\left[\lambda_{k}(\phi)\right] = (2l+1)\pi, \quad l\in \mathbb{Z}.   
\end{equation}
Since $\lambda_{j}(t)$ and $\lambda_{k}(t)$ are continuous, we can choose $\arg[\lambda_{j}(t)]$ and $\arg[\lambda_{k}(t)]$ to be continuous functions of $t$. Then from Eq.~(\ref{eq:def_wind_lambda_j}),
\begin{align}
\arg[\lambda_{j}(1)] -  \arg[\lambda_{j}(0)] &= 2 \pi \cdot \operatorname{wn}(\lambda_{j}),  \\
\arg[\lambda_{k}(1)] -  \arg[\lambda_{k}(0)] &= 2 \pi \cdot \operatorname{wn}(\lambda_{k}).
\end{align}
Now consider the function
\begin{equation}
\Psi(t) \coloneqq \arg[\lambda_{j}(t)] - \arg[\lambda_{k}(t)], \quad t \in [0,1].  
\end{equation}
$\Psi$ is continuous and 
\begin{equation}\label{eq:phase_difference}
\Psi(1) - \Psi(0) = 2\pi \nu, \quad \nu \coloneqq \operatorname{wn}(\lambda_{j})-\operatorname{wn}(\lambda_{k}) \in \mathbb{Z}\backslash\{0\}.    
\end{equation}
We denote 
\begin{equation}
\Psi(0) = 2\pi \chi + \xi, \quad \chi \in \mathbb{Z}, \, \xi \in [-\pi,\pi).    
\end{equation}
Then 
\begin{equation}
\Psi(1) = 2\pi (\nu + \chi) + \xi.    
\end{equation}
We know that the integer $\nu \neq 0$. If $\nu >0$, then 
\begin{equation}
\Psi(0) \leq 2\pi \chi + \pi \leq \Psi(1).    
\end{equation}
The intermediate value theorem implies that there exists $\phi \in [0,1]$ such that 
\begin{equation}
\Psi(\phi) = 2\pi \chi + \pi.    
\end{equation}
We set $l = \chi$. If $\nu <0$, then 
\begin{equation}
\Psi(1) \leq 2\pi \chi - \pi \leq \Psi(0).    
\end{equation}
The intermediate value theorem implies that there exists $\phi \in [0,1]$ such that 
\begin{equation}
\Psi(\phi) = 2\pi \chi - \pi.    
\end{equation}
We set $l = \chi-1$. In any case, we obtain Eq.~(\ref{eq:arg_to_prove}). This completes the proof of (\ref{eq:0_line_segment}). 

We summarized our result so far in its contrapositive form as a lemma:
\begin{lemma*}\label{lemma}
Let $A:[0,1] \to GL(n,\mathbb{C})$ be continuous with $A(0)=A(1)$. If 
\begin{equation}
\alpha(A) = \operatorname{id} \qquad \text{and} \qquad 0 \notin \operatorname{conv}\bm{\lambda}[A(t)], \quad \forall t \in[0,1],    
\end{equation}
then
\begin{equation}
n \mid \operatorname{wn}(A). 
\end{equation}
\end{lemma*}

\begin{figure}[htbp]
    \centering
    \includegraphics[width=1.0\textwidth]{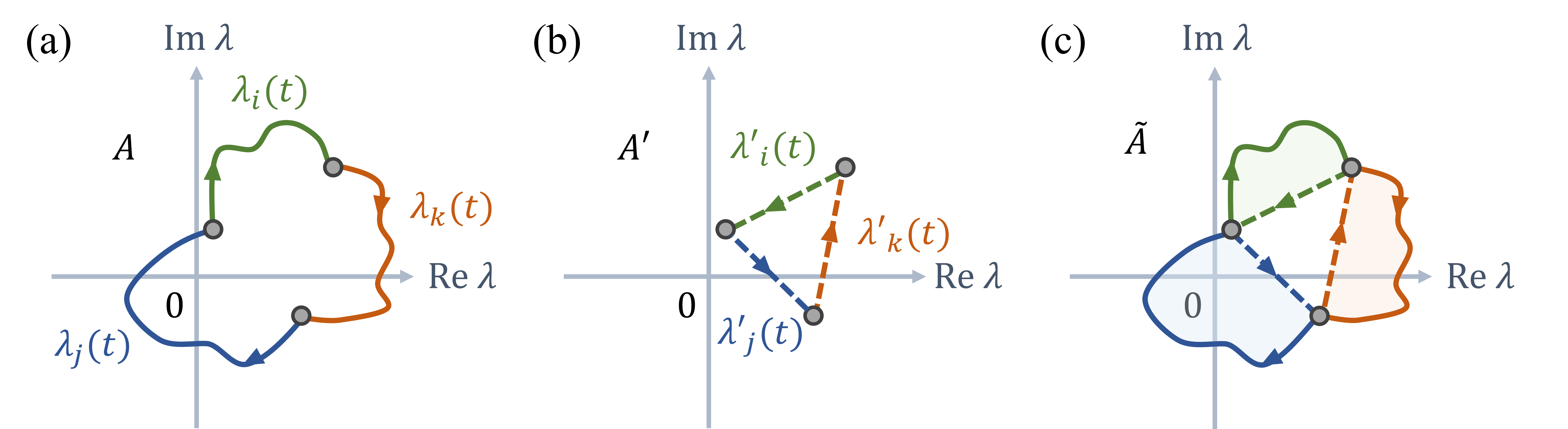}
    \caption{Scheme for the $\alpha(A) \neq \operatorname{id}$ case.}
    \label{fig:alpha_not_id_case}
\end{figure}

\textbf{Case II.} $\alpha(A) \neq \operatorname{id}$. (See Fig.~\ref{fig:alpha_not_id_case} for a scheme.)
We first outline the proof. We prove this case by contradiction. Suppose that there exists $A:[0,1] \to GL(n,\mathbb{C})$ that is continuous with the following properties (Fig.~\ref{fig:alpha_not_id_case}a):
\begin{gather}
A(1)=A(0); \label{eq:Case-II_condition_S_periodic} \\
\lambda_{j}(1) = \lambda_{\alpha_{j}}(0), \quad \alpha(A) \neq \operatorname{id}; \\
\label{eq:Case-II_condition_conv}
0 \notin \operatorname{conv}\bm{\lambda}[A(t)], \quad \forall t \in[0,1]; \\
n\nmid \operatorname{wn}(A).\label{eq:Case-II_condition_not_divide}
\end{gather}
The assumption~(\ref{eq:Case-II_condition_conv}) is opposite to what we want to prove (\ref{eq:theorem:conclusion}). We will show that these assumptions lead to a contradiction with Lemma. We construct a continuous and piecewise differentiable function $A':[0,1] \to M_{n}$ with the following properties (Fig.~\ref{fig:alpha_not_id_case}b):
\begin{gather}
A'(1)=A'(0) = A(1) = A(0); \label{eq:S_prime_property_1}\\
\lambda'_{j}(0) = \lambda_{j}(1), \quad \lambda'_{j}(1) = \lambda_{j}(0);\label{eq:S_prime_property_2}\\
\operatorname{conv}\bm{\lambda}[A'(t)] \subseteq \operatorname{conv}\bm{\lambda}[A(1)], \quad \forall t \in [0,1];\label{eq:S_prime_property_3}\\
\det A'(t) \neq 0, \quad \forall t \in[0,1]; \label{eq:S_prime_property_4}\\
\operatorname{wn}(A') = 0.\label{eq:S_prime_property_5}
\end{gather}
Then we construct the concatenation of $A$ and $A'$ (Fig.~\ref{fig:alpha_not_id_case}c) defined as~\cite{hatcher2002,roe2015}
\begin{equation}
\tilde{A}(t) \coloneqq \begin{cases}
A(2t), &t \in \left[0,\frac{1}{2}\right] \\
A'(2t-1), &t \in \left[\frac{1}{2},1\right]
\end{cases}     
\end{equation}
The winding number of a concatenation of
loops is the sum of the winding numbers of each loop~\cite{roe2015}:
\begin{equation}
\operatorname{wn}(\tilde{A}) = \operatorname{wn}(A) + \operatorname{wn}(A') = \operatorname{wn}(A).    
\end{equation}
Moreover, $\tilde{A}$ satisfies all the conditions of the Lemma, and consequently,
\begin{equation}
n \mid \operatorname{wn}(\tilde{A})=\operatorname{wn}(A)    
\end{equation}
which contradicts our premise (\ref{eq:Case-II_condition_not_divide}). This means that the assumption (\ref{eq:Case-II_condition_conv}) is false, and our original conclusion (\ref{eq:theorem:conclusion}) is true. This completes the proof of Case II. 

Now we fill in the missing details of the proof outlined above.

\vspace{5mm} 
\textbf{1. Construction of $A'$.} 
\vspace{5mm} 

We start with the Schur triangulation~\cite{horn2012} of $A(0)=A(1)$:
\begin{equation}
A(0)=A(1) = U\Lambda U^{*} = V \Lambda' V^{*},    
\end{equation}
where $U, V$ are $m\times m$ unitary matrices, and 
\begin{equation}
\Lambda = \begin{pmatrix}
\lambda_{1}(0) & & \bigstar \\
& \ddots & \\
0 & & \lambda_{n}(0)
\end{pmatrix}, \qquad \Lambda' = \begin{pmatrix}
\lambda_{\alpha_{1}}(0) & & \bigstar \\
& \ddots & \\
0 & & \lambda_{\alpha_{n}}(0)
\end{pmatrix}    
\end{equation}
are upper triangular matrices. Here $\bigstar$ denotes possibly nonzero elements. We recall that $\lambda_j(1) = \lambda_{\alpha_j}(0)$, $j=1,\dots, m$. We define two diagonal matrices
\begin{equation}
D = \begin{pmatrix}
\lambda_{1}(0) & & 0 \\
& \ddots & \\
0 & & \lambda_{n}(0)
\end{pmatrix}, \qquad D' = \begin{pmatrix}
\lambda_{\alpha_{1}}(0) & & 0 \\
& \ddots & \\
0 & & \lambda_{\alpha_{n}}(0)
\end{pmatrix}.    
\end{equation}
Now, we introduce a partition of $[0,1]$:
\begin{equation}
0=t_{1}<t_{2}<t_{3}<t_{4}<t_{5} < t_{6} = 1,    
\end{equation} 
and set
\begin{align}
A'(t_{1})&=  V \Lambda' V^{*} = A(1), \label{eq:S_prime_phi1}\\
A'(t_{2})&= \Lambda', \\
A'(t_{3})&= D',  \\
A'(t_{4})&= D,   \\
A'(t_{5})&= \Lambda,  \\
A'(t_{6})&= U\Lambda U^{*} = A(0).\label{eq:S_prime_phi6}
\end{align}
To connect these points in $M_{n}$, we define two skew-Hermitian matrices
\begin{equation}\label{eq:def_J_K}
\quad J = \log U, \qquad K = \log V,    
\end{equation}
then we set
\begin{equation}\label{eq:def_S_prime}
A'(t) = \begin{cases}
e^{K (t_{2}-t)/(t_{2}-t_{1})}\, A'(t_{2})\, e^{-K (t_{2}-t)/(t_{2}-t_{1})}, &t_{1} \leq t < t_{2}\\
\frac{t_{3}-t}{t_{3}-t_{2}}\, A'(t_{2}) +\frac{t-t_{2}}{t_{3}-t_{2}} \,A'(t_{3}), &t_{2} \leq t < t_{3} \\
\frac{t_{4}-t}{t_{4}-t_{3}} \,A'(t_{3}) +\frac{t-t_{3}}{t_{4}-t_{3}} \,A'(t_{4}), &t_{3} \leq t < t_{4} \\
\frac{t_{5}-t}{t_{5}-t_{4}}\, A'(t_{4}) +\frac{t-t_{4}}{t_{5}-t_{4}} \,A'(t_{5}), &t_{4} \leq t < t_{5} \\
e^{J (t-t_{5})/(t_{6}-t_{5})} \, A'(t_{5}) \, e^{-J (t-t_{5})/(t_{6}-t_{5})}, &t_{5} \leq t \leq t_{6}
\end{cases}    
\end{equation}
The evolution of $A'(t)$ is as follows: As $t$ goes from $t_1=0$ to $t_2$, $A'(0) = A(1)$ is continuously deformed into its Schur triangulation $\Lambda'$ by unitary similarity; from $t_2$ to $t_3$,  $\Lambda'$ is continuously reduced to $D'$ by gradually diminishing the off-diagonal elements; from $t_3$ to $t_4$, $D'$ is continuously deformed into $D$ by linear interpolation, which leads to a permutation of the diagonal elements; from $t_4$ to $t_5$, $D$ is continuously restored to another Schur triangulation $\Lambda$ by gradually adding the off-diagonal elements; from $t_5$ to $t_6=1$, $\Lambda$ is continuously deformed into $A'(1) =A(0)$ by unitary similarity. 

\vspace{5mm} 
\textbf{2. Properties of $A'$.} 
\vspace{5mm} 

We prove the claimed properties of $A'$. From the definition in Eq.~(\ref{eq:def_S_prime}), we confirm that $A': [0,1]\to M_{n}$ is continuous and piecewise differentiable. 

\begin{itemize}
    \item \emph{Proof of Eq.~(\ref{eq:S_prime_property_1}).} This is a direct result of Eqs.~(\ref{eq:Case-II_condition_S_periodic}), (\ref{eq:S_prime_phi1}) and (\ref{eq:S_prime_phi6}).
\item \emph{Proof of Eq.~(\ref{eq:S_prime_property_2}).} We denote the $n$ eigenvalues of $A'(t)$ as 
\begin{equation}
\bm{\lambda}(A'(t)) = [\lambda'_{1}(t),\dots,\lambda'_{n}(t)]^T.    
\end{equation}
By Kato's theorem, we can choose $\lambda'_{1}(t), \dots, \lambda'_{n}(t)$ to be continuous function of $t$ on $[0,1]$. Since $A'(0)=A(1)$, we can choose the ordering of $\lambda'_{j}$ such that
\begin{equation}
\lambda'_{j}(0) = \lambda_{j}(1) = \lambda_{\alpha_{j}}(0). 
\end{equation}
From the definition of $A'(t)$ [Eq.~(\ref{eq:def_S_prime})], we obtain
\begin{equation}\label{eq:lambda_prime_phi}
\lambda'_{j}(t) = \begin{cases}
\lambda_{\alpha_{j}}(0), & t_{1} \leq t < t_{3} \\
\frac{t_{4}-t}{t_{4}-t_{3}} \lambda_{\alpha_{j}}(0) +\frac{t-t_{3}}{t_{4}-t_{3}} \lambda_{j}(0), & t_{3} \leq t < t_{4}  \\
\lambda_{j}(0), & t_{4} \leq t \leq t_{6}
\end{cases}    
\end{equation}
In particular,
\begin{equation}
\lambda'_{j}(1) = \lambda'_{j}(t_{6}) = \lambda_{j}(0).    
\end{equation}
This completes the proof of Eq.~(\ref{eq:S_prime_property_2}).

\item \emph{Proof of Eq.~(\ref{eq:S_prime_property_3}).}
From Eq.~(\ref{eq:lambda_prime_phi}), we obtain
\begin{align}
&\operatorname{conv}\bm{\lambda}[A'(t)] = \operatorname{conv}\bm{\lambda}[A(1)], &t_{1} \leq t < t_{3};  \label{eq:conv_lambda_phi13}  \\
&\operatorname{conv}\bm{\lambda}[A'(t)] = \operatorname{conv}\bm{\lambda}[A(0)] = \operatorname{conv}\bm{\lambda}[A(1)], &t_{4} \leq t < t_{6}, \label{eq:conv_lambda_phi46}    
\end{align}
where we have used Eq.~(\ref{eq:Case-II_condition_S_periodic}) in Eq.~(\ref{eq:conv_lambda_phi46}). We also note from Eq.~(\ref{eq:lambda_prime_phi}) that
\begin{equation}
\lambda_{j}(t)\in \overline{\lambda_{\alpha_{j}}(0)\lambda_{j}(0)} \subseteq \operatorname{conv}\bm{\lambda}[A(1)],  \quad j=1,\dots,m, \quad  t_{3}\leq t<t_{4}.    \label{eq:inclusion_conv_lambda_phi34}
\end{equation}
By definition,  $\operatorname{conv}\bm{\lambda}[A'(t)]$  is the smallest convex set on $\mathbb{C}$ that contains all $\lambda_{j}(t)$. Thus, Eq.~(\ref{eq:inclusion_conv_lambda_phi34}) implies that
\begin{equation}
\operatorname{conv}\bm{\lambda}[A'(t)] \subseteq \operatorname{conv}\bm{\lambda}[A(1)], \quad  t_{3}\leq t<t_{4}.  \label{eq:conv_lambda_phi34}  
\end{equation}
Combining (\ref{eq:conv_lambda_phi13}), (\ref{eq:conv_lambda_phi46}), and (\ref{eq:conv_lambda_phi34}), we obtain 
\begin{equation}
\operatorname{conv}\bm{\lambda}[A'(t)] \subseteq \operatorname{conv}\bm{\lambda}[A(1)], \quad  \forall t \in [0,1].    
\end{equation}
This completes the proof of~(\ref{eq:S_prime_property_3}).
\item \emph{Proof of (\ref{eq:S_prime_property_4}).} From our assumption~(\ref{eq:Case-II_condition_conv}),
\begin{equation}\label{eq:0_not_in_conv_lambda}
0 \notin \operatorname{conv}\bm{\lambda}[A(1)]. 
\end{equation}
Combining (\ref{eq:0_not_in_conv_lambda}) and (\ref{eq:S_prime_property_3}), we obtain
\begin{equation}
0 \notin \operatorname{conv}\bm{\lambda}[A'(t)], \quad \forall t \in [0,1],    
\end{equation}
which implies that
\begin{equation}
\lambda'_{j}(t) \neq 0, \quad \forall t \in [0,1], \quad j=1,\dots, m.    
\end{equation}
Therefore,
\begin{equation}
\det A'(t) = \prod_{j=1}^{n} \lambda'_{j}(t) \neq 0, \quad \forall t \in [0,1].    
\end{equation}
This completes the proof of Eq.~(\ref{eq:S_prime_property_4}).

\item \emph{Proof of Eq.~(\ref{eq:S_prime_property_5}).} Since $A': [0,1]\to M_{n}$ is continuous and piecewise differentiable with the properties~(\ref{eq:S_prime_property_1}) and (\ref{eq:S_prime_property_4}), it has a well-defined winding number 
\begin{equation}
\operatorname{wn}(A') \coloneqq \frac{1}{2\pi i}\int_0^{1} \frac{1}{\det A'} \frac{\mathrm{d} \det A'}{\mathrm{d} t}  \, \mathrm{d}t  = \sum_{j=1}^{n}\frac{1}{2\pi i}\int_0^{1} \frac{1}{\lambda'_{j}} \frac{\mathrm{d} \lambda'_{j}}{\mathrm{d} t}  \, \mathrm{d}t. 
\end{equation}
Eq.~(\ref{eq:lambda_prime_phi}) shows that 
\begin{equation}
\frac{\mathrm{d} \lambda'_{j}}{\mathrm{d} t} = 0, \qquad t \in [0,t_3] \cup [t_4,1]. 
\end{equation}
Therefore, 
\begin{equation}\label{eq:wind_S_prime_phi34}
\operatorname{wn}(A')  = \sum_{j=1}^{n}\frac{1}{2\pi i}\int_{t_{3}}^{t_{4}} \frac{1}{\lambda'_{j}} \frac{\mathrm{d} \lambda'_{j}}{\mathrm{d} t}  \, \mathrm{d}t. 
\end{equation}
We will show that $\operatorname{wn}(A')  = 0$. Since any permutation can be uniquely expressed as a product of disjoint cycles up to the order of the cycles, we have
\begin{equation}
    \alpha(A) = \beta_1 \beta_2 \cdots \beta_q
\end{equation}
where $\beta_1, \beta_2,\dots, \beta_q$ are disjoint cycles of length $l_1, l_2, \dots, l_q$, respectively, so that  
\begin{equation}
    n = \sum_{k=1}^q l_k.
\end{equation}
(Fixed points are included as $1$-cycles.) These disjoint cycles partition the set $\{1,2, \ldots, n\}$ into disjoint subsets $E_1, E_2, \ldots, E_q$. We can rewrite Eq.~(\ref{eq:wind_S_prime_phi34}) as    
\begin{equation}\label{eq:wind_S_prime_sum_mu_k}
\operatorname{wn}(A')  = \sum_{k=1}^{q} \sum_{j \in E_k} 
\frac{1}{2\pi i}\int_{t_{3}}^{t_{4}} \frac{1}{\lambda'_{j}} \frac{\mathrm{d} \lambda'_{j}}{\mathrm{d} t}  \, \mathrm{d}t = \sum_{k=1}^{q} \mu_k,  
\end{equation}
where we define
\begin{equation}\label{eq:def_mu_k}
\mu_k \coloneqq \sum_{j \in E_k} 
\frac{1}{2\pi i}\int_{t_{3}}^{t_{4}} \frac{1}{\lambda'_{j}} \frac{\mathrm{d} \lambda'_{j}}{\mathrm{d} t}  \, \mathrm{d}t.     
\end{equation}
We claim that 
\begin{equation} \label{eq:claim_mu_k}
    \mu_k = 0, \qquad k=1,\ldots,q. 
\end{equation}
If Eq.~(\ref{eq:claim_mu_k}) is true, then combining Eqs.~(\ref{eq:claim_mu_k}) and (\ref{eq:wind_S_prime_sum_mu_k}), we obtain:
\begin{equation}
\operatorname{wn}(A') = 0.    
\end{equation} 
which completes the proof of Eq.~(\ref{eq:S_prime_property_5}).

Now we prove Eq.~(\ref{eq:claim_mu_k}).

If $l_k = 1$, then $E_k = \{j\}$ with $\alpha_j = j$. From Eq.~(\ref{eq:lambda_prime_phi}),
\begin{equation}
\lambda'_j(t) = \lambda_j(0),  \qquad t_{3} \leq t < t_{4},  
\end{equation}
thus $\mu_k = 0$. 

If $l_k = 2$, then $E_k = \{j, j'\}$ with $\alpha_j = j'$ and $\alpha_{j'} = j$. From Eq.~(\ref{eq:lambda_prime_phi}),
\begin{align}
\lambda'_j(t) &= \frac{t_{4}-t}{t_{4}-t_{3}} \lambda_{j'}(0) +\frac{t-t_{3}}{t_{4}-t_{3}} \lambda_{j}(0),  \qquad t_{3} \leq t < t_{4};  \\
\lambda'_{j'}(t) &= \frac{t_{4}-t}{t_{4}-t_{3}} \lambda_{j}(0) +\frac{t-t_{3}}{t_{4}-t_{3}} \lambda_{j'}(0),  \qquad t_{3} \leq t < t_{4}.
\end{align}
Thus we have 
\begin{equation}
\lambda'_{j'}(t) = \lambda'_{j}(t_3+ t_4 - t);
\end{equation}
\begin{equation}
\frac{\mathrm{d}}{\mathrm{d} t }\lambda'_{j}(t) = -\frac{\mathrm{d}}{\mathrm{d} t }\lambda'_{j'}(t) = \frac{\lambda'_{j}(0)-\lambda'_{j'}(0)}{t_4 -t_3}.
\end{equation}
Therefore, 
\begin{align}
\mu_k &= 
\frac{1}{2\pi i}\int_{t_{3}}^{t_{4}} \left\{\frac{1}{\lambda'_{j}} \frac{\mathrm{d} \lambda'_{j}}{\mathrm{d} t} + \frac{1}{\lambda'_{j'}} \frac{\mathrm{d} \lambda'_{j'}}{\mathrm{d} t} \right\}\, \mathrm{d}t \\
&=\frac{1}{2\pi i} \frac{\lambda'_{j}(0)-\lambda'_{j'}(0)}{t_4 -t_3} \int_{t_{3}}^{t_{4}} \left\{\frac{1}{\lambda'_{j}(t)} - \frac{1}{\lambda'_{j'}(t)} \right\}\, \mathrm{d}t \\
&=\frac{1}{2\pi i} \frac{\lambda'_{j}(0)-\lambda'_{j'}(0)}{t_4 -t_3} \int_{t_{3}}^{t_{4}} \left\{\frac{1}{\lambda'_{j}(t)} - \frac{1}{\lambda'_{j}(t_3 + t_4 - t)} \right\}\, \mathrm{d}t \\
&= 0.
\end{align}
The last equality follows from the identity:
\begin{equation}
\int_a^b f(x) \, \mathrm{d}x = \int_a^b f(a+b -x) \, \mathrm{d}x.   
\end{equation}
If $l_k \geq 3$, then $E_k = \{j_1, j_2, \ldots, j_{l_k}\}$. We arrange them such that 
\begin{equation}\label{eq:j_s_arrangement}
\alpha_{j_s} = j_{s-1}, \quad s=1,2, \dots, l_k,  
\end{equation}
where we define $j_0 \coloneqq j_{l_k}$. Substituting Eq.~(\ref{eq:j_s_arrangement}) into Eq.~(\ref{eq:lambda_prime_phi}), we obtain
\begin{equation}
\lambda'_{j_s}(t) = \frac{t_{4}-t}{t_{4}-t_{3}} \lambda_{j_{s}-1}(0) +\frac{t-t_{3}}{t_{4}-t_{3}} \lambda_{j_s}(0), \quad  t_{3} \leq t < t_{4}.   
\end{equation}
As $t$ runs over $[t_3, t_4]$, $\lambda'_{j_s}(t)$ traces out a directed line segment $\lambda_{j_{s}-1}(0) \rightarrow \lambda_{j_{s}}(0)$. Enumerating $j_s \in E_k$, we obtain a (possibly non-simple) directed polygon $P_k$:
\begin{equation}
P_k \coloneqq \begin{tikzcd}
  \lambda_{j_1}(0) \ar[r,"\lambda'_{j_2}(t)"]
& \lambda_{j_2}(0) \ar[r,"\lambda'_{j_3}(t)"]
& \lambda_{j_{3}}(0) \;\; \cdots \;\; 
\lambda_{j_{l_k -1}}(0) \ar[r,"\lambda'_{j_{l_k}}(t)"]
& \lambda_{j_{l_k}}(0)  \ar[lll,bend left=50,looseness=0.25,"\lambda'_{j_1}(t)"]
\end{tikzcd}.
\end{equation}
Fig.~\ref{fig:alpha_not_id_case}b shows an instance of a directed polygon with $l_k = 3$. Now we see that $\mu_k$ as defined in Eq.~(\ref{eq:def_mu_k}) equals the winding number of the directed polygon $P_k$ around the origin~\cite{hormann2001,sunday2021}. We note that
\begin{equation}
P_k \subseteq \operatorname{conv}\bm{\lambda}[A(0)].
\end{equation}
From our assumption~(\ref{eq:Case-II_condition_conv}),
\begin{equation}\label{eq:0_not_in_conv_lambda_0}
0 \notin \operatorname{conv}\bm{\lambda}[A(0)]. 
\end{equation}
Therefore,
\begin{equation} \label{eq:0_notin_Pk}
0 \notin P_k. 
\end{equation}
It is known that a point is not in a polygon if and only if the winding number of the polygon around that point is zero~\cite{hormann2001,sunday2021}. Therefore,
\begin{equation}
    \mu_k = 0. 
\end{equation}
This completes the proof of Eq.~(\ref{eq:claim_mu_k}) and thus Eq.~(\ref{eq:S_prime_property_5}). 

\end{itemize}
\end{proof} 

\section*{Declaration of competing interest}
We declaim that there is no competing interest.

\section*{Acknowledgements}
This work is funded by a Simons Investigator in Physics
grant from the Simons Foundation (Grant No. 827065)



 \bibliographystyle{elsarticle-num} 
 \bibliography{main}





\end{document}